\newcommand{\proj}{\mathbb{P}}
\renewcommand{\P}{\mathbb{P}}
\newcommand{\Z}{\mathbb{Z}}
\newcommand\call{\mathcal{L}}
\newcommand{\eps}{\varepsilon}
\renewcommand{\phi}{\varphi}
\newcommand\wtilde[1]{\widetilde{#1}}
\theoremstyle{plain}
\numberwithin{equation}{section}
\newtheorem{theorem}{Theorem}[section]
\newtheorem*{theorem*}{Theorem}
\newtheorem{lemma}[theorem]{Lemma}
\newtheorem{corollary}[theorem]{Corollary}
\newtheorem{conjecture}[theorem]{Conjecture}
\newtheorem{definition}[theorem]{Definition}
\theoremstyle{definition}
\newtheorem{question}[theorem]{Question}
\newtheorem{problem}[theorem]{Problem}
\newtheorem{remark}[theorem]{Remark}
\newtheorem{example}[theorem]{Example}
\newcommand\newop[2]{\def#1{\mathop{\rm #2}\nolimits}}
\newop\Bl{Bl}
\newop\mult{mult}
\newop\Pic{Pic}
\begin{document}
\title{Negative curves on special rational surfaces}

\subjclass[2010]{14C20}
\thanks{}

\date{\today}

\author[Dumnicki et al.]{Marcin Dumnicki}
\address{Jagiellonian University, Faculty of Mathematics and Computer Scien\-ce, {\L}ojasiewicza~6, PL-30-348 Krak\'ow, Poland}
\email{Marcin.Dumnicki@uj.edu.pl}

\author[]{{\L}ucja Farnik}
\address{Department of Mathematics, Pedagogical University of Cracow,
   Podchor\c a\.zych 2,
   PL-30-084 Krak\'ow, Poland}
\email{Lucja.Farnik@gmail.com}

\author[]{Krishna Hanumanthu}
\address{Chennai Mathematical Institute, H1 SIPCOT IT Park, Siruseri, Kelambakkam 603103, India}
\email{krishna@cmi.ac.in}

\author[]{Grzegorz Malara}
\address{Department of Mathematics, Pedagogical University of Cracow,
   Podchor\c a\.zych 2,
   PL-30-084 Krak\'ow, Poland}
\email{grzegorzmalara@gmail.com}

\author[]{Tomasz Szemberg}
\address{Department of Mathematics, Pedagogical University of Cracow,
   Podchor\c a\.zych 2,
   PL-30-084 Krak\'ow, Poland}
\email{tomasz.szemberg@gmail.com}

\author[]{Justyna Szpond}
\address{Department of Mathematics, Pedagogical University of Cracow,
   Podchor\c a\.zych 2,
   PL-30-084 Krak\'ow, Poland}
\email{szpond@up.krakow.pl}

\author[]{Halszka Tutaj-Gasi\'{n}ska}
\address{Jagiellonian University, Faculty of Mathematics and Computer Scien\-ce, {\L}ojasiewicza~6, PL-30-348 Krak\'ow, Poland}
\email{halszka.tutaj-gasinska@im.uj.edu.pl}

\thanks{
        KH was partially supported by a grant from Infosys
        Foundation and by DST SERB MATRICS grant MTR/2017/000243.
        TS and JS were partially supported by Polish National
        Science Centre grant 2018/30/M/ST1/00148.}

\begin{abstract}
We study negative curves on surfaces obtained by blowing up
special configurations of points in $\proj^2$. Our main results concern the following configurations: very general points on an irreducible cubic, $3$--torsion points on an elliptic curve and nine Fermat points.
As a consequence of our analysis, we also show that the Bounded
Negativity Conjecture holds for the surfaces we consider.
   The note contains also some problems for future attention.
\end{abstract}

\maketitle

\section{Introduction}
   Negative curves on algebraic surfaces are an object of classical interest.
   One of the most prominent achievements of the Italian School of
   algebraic geometry was Castelnuovo's
   Contractibility Criterion.
\begin{definition}[Negative curve]
   We say that a reduced and irreducible curve $C$ on a smooth projective surface is \emph{negative},
   if its self-intersection number $C^2$ is less than zero.
\end{definition}
\begin{example}[Exceptional divisor, $(-1)$-curves]
   Let $X$ be a smooth projective surface and let $P\in X$ be a closed point.
   Let $f:\Bl_PX\to X$ be the blow up of $X$ at the point $P$. Then the exceptional
   divisor $E$ of $f$ (i.e., the set of points in $\Bl_PX$ mapped by $f$ to $P$) is a negative
   curve. More precisely, $E$ is rational and $E^2=-1$. By a slight abuse of language
   we will call such curves simply $(-1)$--curves.
\end{example}
   Castelnuovo's result asserts that the converse is also true, see \cite[Theorem V.5.7]{Hartshorne} or
   \cite[Theorem III.4.1]{BPV}.
\begin{theorem}[Castelnuovo's Contractibility Criterion]
   Let $Y$ be a smooth projective surface defined over an algebraically closed field.
   If $C$ is a rational curve with $C^2=-1$, then there exists a
   smooth projective surface $X$ and a projective morphism
   $f:Y\to X$ contracting $C$ to a smooth point on $X$. In other words, $Y$ is isomorphic
   to $\Bl_PX$ for some point $P\in X$.
\end{theorem}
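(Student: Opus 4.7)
The plan is to construct the contracting morphism $f: Y \to X$ directly by exhibiting a complete linear system on $Y$ that collapses $C$ to a point while remaining an embedding away from $C$. Start with a very ample divisor $H$ on $Y$ and set $k := H \cdot C > 0$. After replacing $H$ by a sufficiently large multiple (so as to guarantee the vanishing statements below and to make $H+K_Y$ positive enough), consider the divisors $H_i := H + iC$ for $0 \le i \le k$. The candidate for the contracting system is $D := H_k$, which already satisfies $D \cdot C = k - k = 0$.

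The technical engine is an induction on $i$ showing that $H^1(Y, H_i) = 0$ and that the restriction map $H^0(Y, H_i) \to H^0(C, H_i|_C)$ is surjective. The tool is the short exact sequence
$$0 \to \mathcal{O}_Y(H_{i-1}) \to \mathcal{O}_Y(H_i) \to \mathcal{O}_C(H_i|_C) \to 0,$$
together with the identification $H_i|_C \cong \mathcal{O}_{\proj^1}(k-i)$, which has vanishing $H^1$ for $0 \le i \le k$. The base case uses the vanishing of $H^1(Y,H)$ coming from the initial choice of $H$, and the induction step combines this with the long exact cohomology sequence. Specializing to $i=k$ shows that every global section of $D$ restricts to a constant on $C$.

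From this I would deduce that $|D|$ is base-point-free, contracts $C$ to a single point $P$, and separates points and tangent vectors at every pair of points at least one of which lies off $C$. The associated morphism $\varphi_{|D|}: Y \to \proj^N$ therefore factors as $Y \xrightarrow{f} X \hookrightarrow \proj^N$, where $X$ is the image (equivalently, the Stein factorization), $f$ contracts $C$ to $P$, and $f$ is an isomorphism outside $C$. Birationality plus normality considerations (pushing forward $\mathcal{O}_Y$) show that $X$ is smooth away from $P$.

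The main obstacle is verifying that $P$ itself is a smooth point of $X$, i.e., that $\dim_k \mathfrak{m}_{X,P}/\mathfrak{m}_{X,P}^2 = 2$. The strategy is to produce, possibly after passing to a further multiple of $D$, two sections $s_1, s_2 \in H^0(Y, D)$ whose vanishing loci meet $C$ transversally in a single common point $Q$ and whose images in $\mathfrak{m}_{X,P}/\mathfrak{m}_{X,P}^2$ are linearly independent. Existence of such sections is arranged by twisting by suitable multiples of $C$ and reusing the vanishing of $H^1(Y, H_i)$ to lift prescribed behaviour along $C$ to global sections; transversality at $Q$ then propagates to smoothness at $P$ because $f$ induces an isomorphism of completed local rings $\widehat{\mathcal{O}}_{X,P} \cong \widehat{\mathcal{O}}_{Y,Q}/(\text{ideal defining }C\text{ near }Q)$ of the expected shape. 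This local-ring analysis is where the work is concentrated; the earlier cohomology merely guarantees that enough sections are available to carry it out.
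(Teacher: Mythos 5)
The paper does not actually prove this theorem: it is quoted as a classical result with references to Hartshorne (Theorem V.5.7) and Barth--Peters--Van de Ven (Theorem III.4.1), so there is no internal proof to compare against. Your outline is, in its first two thirds, precisely Hartshorne's argument: take $H$ very ample with $H^1(Y,\mathcal{O}_Y(H))=0$, set $k=H\cdot C$, run the induction on $H_i=H+iC$ via the sequence $0\to\mathcal{O}_Y(H_{i-1})\to\mathcal{O}_Y(H_i)\to\mathcal{O}_C(H_i|_C)\to 0$ with $\deg H_i|_C=k-i$, and conclude that $|H+kC|$ is base-point free, contracts $C$, and embeds the complement. That part is correct and complete in all essentials.

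The genuine gap is in the final step, where all the difficulty of the theorem is concentrated. Your proposed identification $\widehat{\mathcal{O}}_{X,P}\cong\widehat{\mathcal{O}}_{Y,Q}/(\text{ideal of }C\text{ near }Q)$ for a single point $Q\in C$ cannot be right: that quotient is the completed local ring of the curve $C$ at $Q$, which is one-dimensional, whereas $\widehat{\mathcal{O}}_{X,P}$ must be two-dimensional. Since $f$ contracts the entire curve $C$ to $P$, a function germ at $P$ pulls back to a function on a whole neighbourhood of $C$, not just of one point of $C$; the correct tool is the theorem on formal functions, which gives $\widehat{\mathcal{O}}_{X,P}\cong\varprojlim_n H^0\bigl(C_n,\mathcal{O}_{C_n}\bigr)$ where $C_n$ is the $n$-th infinitesimal neighbourhood of $C$. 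One then uses $\mathcal{I}_C^n/\mathcal{I}_C^{n+1}\cong\mathcal{O}_{\mathbb{P}^1}(n)$ (this is where $C^2=-1$ and rationality of $C$ enter decisively) to show that the graded pieces match those of $k[[x,y]]$, with the two sections of $\mathcal{O}_{\mathbb{P}^1}(1)$ supplying the generators $x,y$ of $\mathfrak{m}_P$. Your instinct to produce two distinguished sections is pointing at the right structure, but the transversality-at-one-point picture you describe does not capture the actual mechanism, and as written the smoothness of $P$ is not established. The cohomological preliminaries you set up are exactly what is needed to run the formal-functions computation, so the fix is a redirection of the last step rather than a rebuild.
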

   The above result plays a pivotal role in the Enriques-Kodaira classification of surfaces.

   Of course, there are other situations in which negative curves on algebraic surfaces appear.
\begin{example}\label{ex: C x C}
   Let $C$ be a smooth curve of genus $g(C)\geq 2$. Then the diagonal $\Delta\subset C\times C$
   is a negative curve as its self-intersection is $\Delta^2=2-2g$.
\end{example}
   It is quite curious that it is in general not known if for a general curve $C$, there are other
   negative curves on the surface $C\times C$, see \cite{Kou93}. It is in fact even more interesting,
   that there is a direct relation between this problem and the famous Nagata Conjecture. This was
   observed by Ciliberto and Kouvidakis \cite{CilKou99}.

There is also a connection between negative curves and the Nagata
Conjecture on general blow ups of $\P^2$.
We recall the following conjecture about $(-1)$-curves which in fact
implies the Nagata Conjecture; see \cite[Lemma 2.4]{CHMR13}.
\begin{conjecture}[Weak SHGH Conjecture] \label{(-1)-curves}
   Let $f: X\to \P^2$ be the blow up of the projective plane $\P^2$ in
   general points $P_1,\ldots,P_s$. If $s\geq 10$, then the only negative curves
   on $X$ are the $(-1)$-curves.
\end{conjecture}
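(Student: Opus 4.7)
The plan is to argue by contradiction. Suppose $C \subset X$ is an irreducible curve with $C^2 \leq -2$, so not a $(-1)$-curve. Writing $C \sim dH - \sum_{i=1}^{s} m_i E_i$, where $H$ is the pullback of a line from $\P^2$ and the $E_i$ are the exceptional divisors, one has $d \geq 1$ and $m_i \geq 0$ (the proper transform of a plane curve with the $P_i$ general). The adjunction formula yields
\[
2 p_a(C) - 2 \;=\; d(d-3) + \sum_{i=1}^{s} m_i(m_i - 1),
\]
and irreducibility of $C$ forces $p_a(C) \geq 0$. Combined with $C^2 = d^2 - \sum m_i^2 \leq -2$, this is a Diophantine constraint on $(d; m_1, \ldots, m_s)$ that every hypothetical counterexample must satisfy.

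The natural next step is to invoke the Weyl group $W$ generated by permutations of the indices and by reflection in $H - E_1 - E_2 - E_3$ (the class of the quadratic Cremona involution based at three of the points), acting on $\Pic(X)$ by isometries. Because the $P_i$ are very general, every element of the $W$-orbit of an effective irreducible class is again represented by an effective irreducible divisor. I would try to reduce $(d; m_1, \ldots, m_s)$ to a normal form with $m_1 \geq \cdots \geq m_s \geq 0$ and $d \geq m_1 + m_2 + m_3$; in such a normal form the combination $C^2 \leq -2$ together with $p_a(C) \geq 0$ should leave room only for $(-1)$-classes, producing the desired contradiction.

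The main obstacle, and the precise reason this statement remains open, is that the Cremona reduction need not terminate on the effective cone once $s \geq 10$: the Weyl group of $\Pic(X)$ is then of hyperbolic Kac-Moody type, orbits of effective classes on the positive cone can be infinite, and no monotone numerical invariant is known that separates $(-1)$-classes from potential further negative classes. A fallback would be a specialization/semicontinuity strategy: collide the very general points onto a special configuration (for instance points on an irreducible cubic or on a union of lines, in the spirit of the configurations studied later in this paper), rule out extra negative curves on the central fibre by Horace-type cohomological vanishing, and transfer the conclusion to the general fibre via semicontinuity of $h^0$. The genuinely hard point is that the vanishing required on the degenerate limit is itself of SHGH flavour, so any successful attack must inject additional geometric input — such as the structure of the elliptic fibration on an anticanonical nine-point blow-up or finer combinatorial control of base loci — that is not presently available in the needed generality.
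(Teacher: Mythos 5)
This statement is a \emph{conjecture}, not a theorem: the paper states it as Conjecture \ref{(-1)-curves} precisely because no proof is known (it implies the Nagata Conjecture via \cite[Lemma 2.4]{CHMR13}, so a proof would be a major result). There is therefore no proof in the paper to compare against, and your proposal is not a proof either --- by your own admission. What you have written is an accurate survey of the standard strategy and of the exact point where it breaks down.

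To be concrete about the gap: your adjunction-plus-Cremona-reduction scheme is the classical approach, and it does succeed for $s \leq 9$, where the relevant Weyl group is of affine (or finite) type and the reduction to a normal form with $d \geq m_1 + m_2 + m_3$ terminates. For $s \geq 10$ the Weyl group becomes hyperbolic, the degree is no longer forced to drop under the reflections you apply, and --- as you correctly note --- no monotone invariant is known that certifies termination on the effective cone. Your fallback via specialization to special configurations (such as points on a cubic, as in Section \ref{sec:cubic} of this paper) runs into the circularity you also identify: the cohomological vanishing needed on the degenerate fibre is itself an SHGH-type statement. So the proposal contains no new idea that closes either gap; it should be presented as a discussion of why the conjecture is open, not as a proof attempt. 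If you were asked to ``prove'' this statement, the correct answer is that it cannot currently be done, and the honest portions of your write-up already say so.
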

   On the other hand, it is well known that already a blow up of $\P^2$ in $9$ general points
   carries \emph{infinitely} many $(-1)$--curves.

   One of the central and widely open problems concerning negative curves on algebraic surfaces
   asks whether on a fixed surface negativity is bounded. More precisely, we have
   the following conjecture (BNC in short).  See \cite{BNC} for an extended introduction to this problem.
\begin{conjecture}[Bounded Negativity Conjecture]\label{bnc}
   Let $X$ be a smooth projective surface. Then there exists a number $\tau$ such that
   $$C^2\geq \tau$$
   for any reduced and irreducible curve $C\subset X$.
\end{conjecture}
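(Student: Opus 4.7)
Since the Bounded Negativity Conjecture is famously open in full generality, any plan I can sketch amounts to isolating the geometric hypotheses under which the standard adjunction strategy succeeds, and indicating where it breaks down.

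The key tool is the arithmetic genus bound $p_a(C)\geq 0$: for a reduced irreducible curve $C\subset X$, adjunction gives $C^2 + C\cdot K_X = 2p_a(C)-2 \geq -2$, so
\[ C^2 \;\geq\; -2 - C\cdot K_X. \]
My plan is to bound $C\cdot K_X$ from above uniformly in $C$, and the cleanest way is to exhibit an effective anti-canonical divisor $D\sim -K_X$. Once such $D$ is fixed, every reduced irreducible $C$ that is \emph{not} a component of $D$ intersects $D$ non-negatively, so $C\cdot K_X = -C\cdot D \leq 0$ and therefore $C^2 \geq -2$. The finitely many components of $D$ are themselves reduced and irreducible, so their self-intersections form a finite set; setting $\tau$ to be the minimum of $-2$ and these finitely many values gives the desired uniform lower bound.

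For the rational surfaces considered in this paper---blow-ups of $\P^2$ at points lying on an irreducible cubic---this strategy applies: provided the cubic passes simply through each of the blown-up points, the formula $K_{\Bl_P X} = \pi^*K_X + \sum E_i$ shows that the strict transform of the cubic represents $-K$ of the blow-up. The BNC then reduces to listing the finitely many irreducible components of this anti-canonical divisor and computing their self-intersections, both of which are finite combinatorial tasks dictated by the incidences in the configuration.

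The main obstacle to a general proof is the opposite extreme: when $X$ is a blow-up of $\P^2$ at $s\geq 10$ \emph{very general} points, $-K_X$ fails to be even pseudo-effective, the adjunction bound above is vacuous, and what remains is exactly the problem of controlling multiplicities of plane curves at very general points---the content of the Weak SHGH Conjecture~\ref{(-1)-curves}. Bridging this gap would require genuinely new input, which is why any realistic plan for the present paper must restrict attention to configurations where an explicit effective anti-canonical (or, at worst, a numerically effective surrogate) can be exhibited from the start.
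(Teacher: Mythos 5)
This statement is a conjecture that the paper does not (and cannot) prove in general; you correctly recognize this, and your adjunction argument for the case where $-K_X$ is effective is exactly the paper's own partial argument in Remark~\ref{anti-canonical}, including the reduction to the finitely many components of the anti-canonical divisor. Your assessment of where the strategy breaks down (ten or more very general points, where $-K_X$ is no longer effective and one is thrown back on Conjecture~\ref{(-1)-curves}) also matches the paper's discussion.
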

   If the Conjecture holds on a surface $X$, then we denote by $b(X)$ the largest
   number $\tau$ such that the Conjecture holds. It is known (see
   \cite[Proposition 5.1]{BNC}) that if
   the negativity of reduced and irreducible curves is bounded below,
   then the negativity of all reduced curves is also bounded below.


Conjecture \ref{bnc} is known to fail in the positive characteristic;
see \cite{Har10,BNC}.
 In fact Example \ref{ex: C x C} combined with the action of the Frobenius morphism provides
   a counterexample. In characteristic zero, Conjecture \ref{bnc} is
   open in general. It is easy to prove BNC in some cases; see Remark
\ref{anti-canonical} for an easy argument when  the anti-canonical
divisor of $X$ is nef. However, in many other cases the
conjecture is open. In particular the following question is open and
answering it may lead to a better understanding of Conjecture \ref{bnc}.

\begin{question}\label{que: birational}
Let $X,Y$ be smooth projective surfaces and suppose that $X$ and $Y$
are birational and Conjecture \ref{bnc} holds for $X$. Then does
Conjecture \ref{bnc} hold for $Y$ also?
\end{question}

This is not known even in the simplest case, when one of surfaces is $\P^2$
(where Conjecture \ref{bnc} obviously holds) and the other is a blow up of $\P^2$.
If we blow up general points, then this is governed by Conjecture \ref{(-1)-curves}.
The question is of interest also for special configurations of points in $\P^2$
and we focus our research here on such configurations.
More concretely, we consider some examples of such special rational
surfaces and list all negative curves on them.
In particular, we study
blow ups of $\P^2$ at certain points which lie on elliptic curves.
Our main results classify negative curves on such surfaces; see Theorems
\ref{thm: very general on cubic}, \ref{thm: 3 torsion} and \ref{thm: Fermat}.
As a consequence, we show that Conjecture \ref{bnc}  holds for such surfaces.
This recovers some existing results of Harbourne and Miranda \cite{HarMir90}, \cite{Har97TAMS}.
Additionally we compute values of the number $b(X)$ on such surfaces.

\section{Very general points on an irreducible cubic}\label{sec:cubic}
   To put our results in Section \ref{sec: Fermat} into perspective,
   we recall results on negative curves on blow ups of $\P^2$ at
   $s$ very general points on an plane curve of degree $3$.
   Geometry of such surfaces was studied by Harbourne in \cite{Har85}.
\begin{theorem}[Points on a cubic curve]\label{thm: very general on cubic}
Let $D$ be an irreducible and reduced plane cubic and let
   $P_1,\ldots,P_s$
   be smooth points on $D$. Let $f: X \longrightarrow \proj^2$ be the blow up at
   $P_1,\ldots, P_s$. If $C \subset X$ is any reduced and irreducible curve such that
   $C^2 < 0$, then
\begin{itemize}
   \item[a)] $C$ is the proper transform of $D$, or
   \item[b)] $C$ is a $(-1)$-curve, or
   \item[c)] $C$ is a $(-2)$-curve.
\end{itemize}
   Moreover, if the points $P_1,\ldots,P_s$ are very general, then only cases a) and b) are possible.
\end{theorem}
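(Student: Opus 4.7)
The plan is to combine the adjunction formula on $X$ with the crucial observation that $\widetilde D$, the proper transform of $D$, lies in the anticanonical system of $X$. For the very general statement I will then restrict to $\widetilde D\cong D$ and use a countable-union argument in $\Pic^0(D)$.

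First I set notation: let $H$ denote the pullback of a line on $\proj^2$ and $E_1,\ldots,E_s$ the exceptional divisors, so $K_X\sim -3H+\sum E_i$ and $\widetilde D\sim 3H-\sum E_i=-K_X$. Suppose $C$ is a negative irreducible curve with $C\neq\widetilde D$. Since $C$ and $\widetilde D$ are distinct irreducible curves, $C\cdot\widetilde D\geq 0$, i.e.\ $C\cdot K_X\leq 0$. Adjunction then gives
$$-2\;\leq\;2g(C)-2\;=\;C^2+C\cdot K_X\;\leq\;C^2,$$
so $C^2\in\{-1,-2\}$. In both cases an equality analysis (using $C\cdot K_X\leq 0$ and $g(C)\geq 0$) forces $g(C)=0$, producing either a $(-1)$- or a $(-2)$-curve. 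Together with the excluded case $C=\widetilde D$, this gives the trichotomy (a)--(c).

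For the very general refinement I rule out case (c). Write $C\sim dH-\sum m_iE_i$ with $d\geq 1$, $m_i\geq 0$, and $\sum m_i=3d$ (equivalent to $C\cdot K_X=0$). Since $C\cdot\widetilde D=0$ and $C\neq\widetilde D$ are irreducible, $C\cap\widetilde D=\emptyset$. Restricting the line bundle $\mathcal{O}_X(C)$ to $\widetilde D\cong D$ therefore yields the vanishing
$$d\,\mathcal{O}_{\proj^2}(1)\big|_D-\sum_{i=1}^s m_i[P_i]\;=\;0\quad\text{in }\Pic^0(D),$$
an identity in the one-dimensional connected algebraic group $\Pic^0(D)$ (an elliptic curve, $\mathbb G_m$, or $\mathbb G_a$ according to the analytic type of $D$). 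For each fixed tuple $(d,m_\bullet)$ with some $m_i>0$, the map
$$\mu_{d,m_\bullet}\colon (D^{\mathrm{sm}})^s\longrightarrow\Pic^0(D),\qquad (Q_1,\ldots,Q_s)\longmapsto d\,\mathcal{O}_{\proj^2}(1)|_D-\sum m_i[Q_i]$$
is dominant, since varying one coordinate $Q_i$ with $m_i>0$ already sweeps out a dense subset of the one-dimensional target. Hence its zero-fibre is a proper closed subvariety of $(D^{\mathrm{sm}})^s$, and the union over the countably many admissible integer tuples $(d,m_1,\ldots,m_s)$ is a countable union of proper subvarieties, which very general $s$-tuples avoid by definition.

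I expect the main obstacle to be the final paragraph: one must verify the restriction identity as an equality of honest line bundles on $D$ (not merely of numerical classes), and then justify the dominance claim uniformly across the smooth, nodal, and cuspidal cases, checking that the argument really only uses smooth points of $D$. The adjunction step is essentially automatic once one notices $\widetilde D\in|-K_X|$; everything hinges on making the $\Pic^0(D)$ transversality precise.
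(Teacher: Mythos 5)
Your proposal is correct and follows essentially the same route as the paper: the trichotomy comes from adjunction together with the fact that the proper transform of $D$ is an irreducible effective anticanonical divisor (this is exactly the paper's Remark \ref{anti-canonical} and its citation of Harbourne), and the exclusion of $(-2)$-curves for very general points comes from observing that such a curve restricts trivially to $\widetilde D$ and that this is impossible off a countable union of proper subvarieties of configurations. The paper merely asserts that $\ker(\Pic(X)\to\Pic^0(-K_X))$ vanishes for very general points, so your dominance-plus-countability argument is a legitimate (and welcome) filling-in of that step rather than a different approach.
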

\begin{proof}
   The first part of Theorem follows from \cite[Remark III.13]{Har97TAMS} and also from our Remark \ref{anti-canonical}.
   The "moreover" part follows from the following abstract argument.
    A negative curve on $X$ is either a component of $-K_X$, or a $(-1)$-curve or a $(-2)$-curve.
    But a $(-2)$-curve is in $\ker(\Pic(X)\to\Pic^0(-K_X))$, which is $0$ for very general points,
    so there are no $(-2)$-curves.
%
%
%
%
\end{proof}
\begin{corollary}
   Let $X$ be a surface as in Theorem \ref{thm: very general on cubic}
   with $s>0$ very general points. Then Conjecture \ref{bnc} holds for $X$ and we have
   $$b(X)=\min\left\{-1,\; 9-s \right\}.$$
\end{corollary}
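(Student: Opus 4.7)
The plan is to read off $b(X)$ directly from the classification in Theorem \ref{thm: very general on cubic}. Since the points are very general, every reduced and irreducible curve $C$ on $X$ with $C^2<0$ is either the proper transform $\widetilde{D}$ of $D$ or a $(-1)$-curve. Hence the set of self-intersections of reduced and irreducible negative curves on $X$ is contained in $\{-1,\,\widetilde{D}^{\,2}\}$, and it suffices to compute $\widetilde{D}^{\,2}$ and determine which of these two values are actually attained.

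To compute $\widetilde{D}^{\,2}$, write $E_1,\dots,E_s$ for the exceptional divisors of $f$. Since each $P_i$ is a smooth point of $D$, the proper transform satisfies $\widetilde{D}=f^{*}D-\sum_{i=1}^{s}E_i$ in $\Pic(X)$. Combining $(f^{*}D)^{2}=D^{2}=9$, $f^{*}D\cdot E_i=0$ and $E_i\cdot E_j=-\delta_{ij}$ yields $\widetilde{D}^{\,2}=9-s$.

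It then remains to check realisability. Because $s>0$, at least one exceptional divisor exists as a $(-1)$-curve on $X$, so the value $-1$ is always attained. If $s\leq 9$, then $\widetilde{D}^{\,2}\geq 0$, so $\widetilde{D}$ is not a negative curve, and $b(X)=-1=\min\{-1,\,9-s\}$. If $s\geq 10$, then $\widetilde{D}^{\,2}=9-s\leq -1$, so $\widetilde{D}$ itself is a reduced irreducible negative curve realising the smaller value, and $b(X)=9-s=\min\{-1,\,9-s\}$. In either case Conjecture \ref{bnc} holds for $X$ and $b(X)$ takes the claimed value.

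Essentially there is no genuine obstacle: the corollary is bookkeeping once Theorem \ref{thm: very general on cubic} is in hand. The only point that requires care is the case split between $s\leq 9$ and $s\geq 10$, made in order to guarantee that whichever of $-1$ and $9-s$ is the minimum is actually attained by a reduced irreducible curve and is not merely an upper bound on $b(X)$.
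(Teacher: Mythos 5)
Your proof is correct and follows exactly the route the paper intends: the corollary is stated without proof as immediate bookkeeping from Theorem \ref{thm: very general on cubic}, and your computation $\widetilde{D}^{\,2}=9-s$ together with the realisability check (exceptional divisors give $-1$ for $s>0$, and $\widetilde{D}$ gives $9-s$ when $s\geq 10$) supplies precisely the missing details.
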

\section{Special points on a smooth cubic}\label{sec: Fermat}
In this section, we consider blow ups of $\P^2$ at 3-torsion points of
an elliptic curve as well as the points of intersection of the Fermat
arrangement of lines.
   In order to consider these two cases, we deal first with the following numerical lemma which seems quite interesting
   in its own right.
\begin{lemma}\label{lem: nice}
 	Let $m_1,\dots,m_9$ be nonnegative real numbers satisfying the following 12 inequa\-li\-ties:
 	\begin{gather}\label{assumptions}
 	m_1+m_2+m_3 \leq 1,\\\label{1 1}
 	m_4+m_5+m_6 \leq 1,\\
 	m_7+m_8+m_9 \leq 1,\\
 	m_1+m_4+m_7 \leq 1,\\
 	m_2+m_5+m_8 \leq 1,\\
 	m_3+m_6+m_9 \leq 1,\\
 	m_1+m_5+m_9 \leq 1,\\
 	m_2+m_6+m_7 \leq 1,\\\label{1}
 	m_3+m_4+m_8 \leq 1,\\\label{2}
 	m_1+m_6+m_8 \leq 1,\\\label{3}
 	m_2+m_4+m_9 \leq 1,\\\label{4}
 	m_3+m_5+m_7 \leq 1.
 	\end{gather}
 	Then $m_1^2+ \dots + m_9^2 \leq 1$.
\end{lemma}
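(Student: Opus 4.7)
The plan is to exploit the convexity of $Q(m) := \sum_i m_i^2$ on the compact polytope $P \subseteq \mathbb{R}^9_{\geq 0}$ cut out by the twelve hypotheses. Since $Q$ is convex, its maximum over $P$ is attained at a vertex, so it suffices to check $Q(v) \leq 1$ at every vertex $v$.

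As preliminary consequences of the hypotheses, summing all twelve inequalities gives $4 \sum_i m_i \leq 12$, so $S := \sum_i m_i \leq 3$; and summing the four inequalities through any fixed index $i$ (one per parallel class of the Hesse configuration) gives $3 m_i + S \leq 4$, so each $m_i \in [0, (4-S)/3] \subseteq [0,1]$. Together these already yield the weaker bound
\[
Q \leq \tfrac{S(4-S)}{3} \leq \tfrac{4}{3},
\]
which is close to but does not achieve the desired estimate.

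To close the remaining gap, I would enumerate vertices of $P$ up to the action of $AGL_2(\mathbb{F}_3)$, the symmetry group of the Hesse $(9_4, 12_3)$ configuration; it preserves both $P$ and $Q$, so only one representative from each orbit of vertices needs to be inspected. At a vertex $v$ with support $A := \{i : v_i > 0\}$, the nonnegativity constraints $m_i = 0$ for $i \notin A$ account for $9 - |A|$ tight constraints, and one needs a further $|A|$ tight triple-sum constraints whose incidence vectors restrict to a linearly independent set on $\mathbb{R}^A$; the resulting solution must be strictly positive on $A$ and feasible for all remaining inequalities. A finite orbit-by-orbit analysis yields a short list of vertex types, each satisfying $Q(v) \leq 1$: namely $v = 0$ with $Q = 0$; $v = e_i$ with $Q = 1$; $v = (1/2,1/2,1/2)$ supported on a non-collinear triangle with $Q = 3/4$; $v = (1/2,1/2,1/2,1/2)$ supported on a four-element cap set with $Q = 1$; $v = (1/3,1/3,1/3,2/3)$ supported on a line together with an extra point with $Q = 7/9$; $v = (0, 1/3, \ldots, 1/3)$ (and its images under the symmetry group) with $Q = 8/9$; and $v = (1/3,\ldots,1/3)$ with $Q = 1$.

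The main obstacle is the combinatorial labor of ruling out the remaining candidate supports: for $|A| \in \{2, 5, 6, 7\}$ and for $A$ equal to one of the twelve Hesse triples (i.e.\ a line), any consistent choice of $|A|$ tight triple-sum constraints either fails to have rank $|A|$ or else forces $v_i = 0$ for some $i \in A$, contradicting the definition of the support and hence reducing to a vertex with a smaller support that has already been handled. These reductions rely repeatedly on the two defining combinatorial features of the Hesse configuration: every pair of indices lies on a unique triple, and the twelve triples fall into four parallel classes each partitioning the nine indices.
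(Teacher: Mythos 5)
Your overall strategy is legitimate and quite different from the paper's: the feasible region is a compact polytope, $\sum m_i^2$ is convex, so the maximum is attained at a vertex, and your preliminary estimates ($\sum m_i\le 3$, $3m_i+\sum_j m_j\le 4$, hence $Q\le S(4-S)/3\le 4/3$) are all correct. The vertices you do exhibit are genuine vertices and your values of $Q$ at them are right. However, as written the argument has a genuine gap: the entire mathematical content of the proof is the claim that your list of vertex orbits is \emph{complete}, and you never establish this. You state the list, then describe the completeness check (for supports of size $2,5,6,7$ and for supports equal to a single line) as ``the main obstacle'' and ``combinatorial labor'' that ``would'' be carried out, without carrying it out. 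Asserting that every consistent choice of tight constraints on such a support ``either fails to have rank $|A|$ or else forces $v_i=0$ for some $i\in A$'' is precisely the statement that needs proof; for supports of size $5$, $6$, $7$ and $8$ in particular, one must analyze mixtures of tight pair-constraints (lines meeting the support in two points) and tight triple-constraints, and nothing in your write-up rules out an overlooked vertex type with $Q>1$. Until that case analysis is actually performed (or delegated to a verified computation), the lemma is not proved.

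For comparison, the paper avoids the polytope geometry entirely. It normalizes $m_1=1-m$ to be the largest coordinate, groups the remaining eight numbers into the four pairs $(m_2,m_3)$, $(m_4,m_7)$, $(m_9,m_5)$, $(m_6,m_8)$ that complete $m_1$ to a line, observes that the larger elements (``giants'') of three of these pairs always lie on a common line of the configuration, and then uses two elementary quadratic inequalities to bound the sum of six of the squares by $m$ and the remaining two by $(1-m)m$, giving $(1-m)^2+m+(1-m)m=1$. That argument is short, self-contained and checkable by hand; your route, if completed, would have the advantage of being systematic and of generalizing mechanically to the higher-order Fermat configurations raised in the paper's open problems, but in its present form it is an outline of a proof rather than a proof.
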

\begin{proof}
 	Assume that the biggest number among $m_1,\dots,m_9$ is $m_1=1-m$ for some $0\leq m\leq 1$.

Consider the following four pairs of numbers
   $$p_1=(m_2,m_3),\; p_2=(m_4,m_7),\; p_3=(m_9,m_5),\; p_4=(m_6,m_8).$$
   These are pairs such that together with $m_1$ they occur in one of the $12$ inequalities.
   In each pair one of the numbers is greater or equal than the other.
   Let us call this bigger number a \emph{giant}.
   A simple check shows that there are always three pairs, such that their giants
   are subject to one of the $12$ inequalities in the Lemma.

   Without loss of generality, let $p_1$, $p_2$, $p_3$ be such pairs.
   Also without loss of generality, let
   $m_2$, $m_4$ and $m_9$ be the giants.
   Thus $m_2+m_4+m_9 \leq 1$. Assume that also $m_6$ is a giant.

   Inequality $m_2+m_3 \leq m$ implies that
   $$m_2^2 + m_3^2 = (m_2+m_3)^2-2m_2m_3 \leq m(m_2+m_3)-2m_2m_3.$$
   Observe also that
   $$(m_2+m_3)^2-4m_2m_3 \leq m(m_2-m_3).$$
   Analogous inequalities hold for pairs $p_2, p_3$ and $p_4$.
   Therefore
\begin{gather*}
   m_2^2+m_3^2+m_4^2+m_7^2+m_5^2+m_9^2 \leq \\
   \leq m(m_2+m_4+m_9+m_3+m_7+m_5) - 2m_2m_3-2m_4m_7-2m_5m_9 \leq \\
   \leq m + \big[ m(m_3+m_7+m_5) - 2m_2m_3-2m_4m_7-2m_5m_9 \big].
\end{gather*}

   But we have also
\begin{gather*}
m_2^2+m_3^2+m_4^2+m_7^2+m_5^2+m_9^2 =\\
= (m_2+m_3)^2+(m_4+m_7)^2+(m_5+m_9)^2 - 2m_2m_3-2m_4m_7-2m_5m_9 = \\
= (m_2+m_3)^2-4m_2m_3+(m_4+m_7)^2-4m_4m_7+\\ +(m_5+m_9)^2-4m_5m_9 + 2m_2m_3+2m_4m_7+2m_5m_9 \leq \\
\leq m(m_2-m_3) + m(m_4-m_7) + m(m_9-m_5) + 2m_2m_3+2m_4m_7+2m_5m_9 \leq \\
\leq m - \big[ m(m_3+m_7+m_5) - 2m_2m_3-2m_4m_7-2m_5m_9 \big],
\end{gather*}
which obviously gives
$$m_2^2+m_3^2+m_4^2+m_7^2+m_5^2+m_9^2 \leq m.$$

   Since
   $$m_6^2+m_8^2 \leq m_6^2+m_6m_8 \leq m_6(m_6+m_8) \leq (1-m)m,$$
   we get that the sum of all nine squares is bounded by
\begin{equation*}
   (1-m)^2 + m + (1-m)m = 1. \qedhere
\end{equation*}
\end{proof}
   If we think of numbers $m_1,\ldots,m_9$ as arranged in a $3\times 3$ matrix
   $$\left(\begin{array}{ccccc}
      m_1 && m_2 && m_3\\
      m_4 && m_5 && m_6\\
      m_7 && m_8 && m_9
      \end{array}\right),$$
   then the inequalities in the Lemma \ref{lem: nice} are obtained
   considering the horizontal, vertical triples
   and the triples determined by the condition that there is exactly one element $m_i$ in every
   column and every row of the matrix (so determined by permutation matrices).
   Bounding sums of only such triples allows us to bound the sum of squares of all
   entries in the matrix. It is natural to wonder, if this phenomena extends to higher
   dimensional matrices. One possible extension is formulated as the next question.
\begin{problem}
   Let $M=\left(m_{ij}\right)_{i,j=1\ldots k}$ be a matrix whose
   entries are non-negative real numbers. Assume that all the horizontal, vertical
   and permutational $k$-tuples of entries in the matrix $M$ are bounded by $1$.
   Is it true then that the sum of squares of all entries of $M$ is also bounded by $1$?
\end{problem}
\subsection{Torsion points}\label{ssec: torsion points}
   We now consider a blow up of $\P^2$ at $9$ points which are torsion points of order $3$
   on an elliptic curve embedded as a smooth cubic.
\begin{theorem}[$3$--torsion points on an elliptic curve]\label{thm: 3 torsion}
   Let $D$ be a smooth plane cubic and let $P_1,\ldots,P_9$ be the flexes of $D$.
   Let $f:X\to\P^2$ be the blow up of $\P^2$ at $P_1,\ldots,P_9$.
   If $C$ is a negative curve on $X$, then
   \begin{itemize}
      \item[a)] $C$ is the proper transform of a line passing through two (hence three)
      of the points $P_1,\ldots,P_9,$ and $C^2=-2$ or
      \item[b)] $C$ is an exceptional divisor of $f$ and $C^2=-1$.
   \end{itemize}
\end{theorem}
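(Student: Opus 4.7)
The plan is to express any irreducible, reduced curve $C$ on $X$ in the standard basis and then use Lemma \ref{lem: nice} to turn the geometric inequalities coming from the flex lines into a numerical contradiction with $C^2<0$. First I would observe that either $C$ is one of the exceptional divisors $E_1,\dots,E_9$, putting us in case (b), or $C$ is the proper transform of a reduced irreducible plane curve $\Gamma\subset\P^2$ of degree $d\geq 1$. In the latter case we can write
\[
C \;\sim\; dH-\sum_{i=1}^{9}m_iE_i,\qquad m_i=\mult_{P_i}(\Gamma)\geq 0,
\]
where $H$ denotes the pullback of a line; in particular all $m_i$ are nonnegative and $C^2=d^2-\sum m_i^2$.

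The key geometric input is the classical Hesse configuration: the $9$ flexes of $D$ are contained in exactly $12$ lines, each meeting $D$ in three flexes, and each flex lying on four of these lines. Labelling the flexes as the entries of a $3\times 3$ array, these $12$ triples of collinear flexes correspond precisely to the three rows, the three columns, and the six permutation-type triples, i.e.\ the index patterns occurring in the twelve inequalities of Lemma \ref{lem: nice}. For any such flex line $L$ through $P_i,P_j,P_k$, its proper transform $\widetilde{L}=H-E_i-E_j-E_k$ is a smooth rational irreducible curve with $\widetilde{L}^2=-2$.

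If $C=\widetilde{L}$ for some flex line $L$, we are in case (a). Otherwise $C$ is irreducible and distinct from $\widetilde{L}$, so $C\cdot\widetilde{L}\geq 0$, giving $m_i+m_j+m_k\leq d$ for each of the $12$ flex-line triples. Dividing by $d>0$, the nonnegative reals $m_i/d$ satisfy exactly the hypotheses of Lemma \ref{lem: nice}, and we conclude
\[
\sum_{i=1}^{9}\frac{m_i^2}{d^2}\leq 1,\qquad\text{hence}\qquad C^2 = d^2-\sum_{i=1}^{9}m_i^2 \;\geq\; 0,
\]
contradicting $C^2<0$. This forces $C$ to be one of the curves in (a) or (b).

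The only conceptually delicate point is the identification of the $12$ triples of collinear flexes with the $12$ combinatorial patterns of Lemma \ref{lem: nice}; everything else is just bookkeeping. This identification is classical — one may fix coordinates so that $D$ lies in the Hesse pencil and list the twelve lines explicitly — and once it is in place, the purely numerical Lemma \ref{lem: nice} does all the real work. A side remark worth recording is that the proper transform $\widetilde{D}=3H-\sum E_i$ has $\widetilde{D}^2=0$, so $\widetilde{D}$ does not appear among the negative curves; thus no case analogous to (a) of Theorem \ref{thm: very general on cubic} arises here.
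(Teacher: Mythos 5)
Your proposal is correct and follows essentially the same route as the paper: write $C=dH-\sum m_iE_i$, use non-negativity of intersection with the proper transforms of the $12$ Hesse lines to obtain the twelve inequalities of Lemma \ref{lem: nice} for the ratios $m_i/d$, and conclude $C^2=d^2-\sum m_i^2\geq 0$. The paper identifies the $12$ collinear triples via the group structure $\Z_3\times\Z_3$ on the flexes rather than by appeal to the Hesse pencil, but this is only a difference in bookkeeping, and your closing observation that $\widetilde{D}^2=0$ is a pleasant extra check not needed for the argument.
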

\proof
   It is well known that there is a group law on $D$ such that the flexes are $3$--torsion points.
   Since any line passing through
   two of the torsion points automatically meets $D$ in a third torsion point, there are altogether
   $12$ such lines. The torsion points form a subgroup of $D$ which is isomorphic to $\Z_3\times \Z_3$.
   We can pick this isomorphism so that
   $$P_1=(0,0),\; P_2=(1,0),\; P_3=(2,0),$$
   $$P_4=(0,1),\; P_5=(1,1),\; P_6=(2,1),$$
   $$P_7=(0,2),\; P_8=(1,2),\; P_9=(2,2).$$
   This implies that the following triples of points are collinear (note that these are exactly triples of indices
   in inequalities from \eqref{1 1} to \eqref{2}:
   $$(P_1,P_2,P_3),\; (P_4,P_5,P_6),\; (P_7,P_8,P_9),\; (P_1, P_4, P_7),$$
   $$(P_2,P_5,P_8),\; (P_3,P_6,P_9),\; (P_1,P_5,P_9),\; (P_2, P_6, P_7),$$
   $$(P_3,P_4,P_8),\; (P_1,P_6,P_8),\; (P_2,P_4,P_9),\; (P_3, P_5, P_7).$$
   Let $C$ be a reduced and irreducible curve on $X$ different from the exceptional divisors of $f$
   and the proper transforms of lines through the torsion points. Then $C$ is of the form
   $$C=dH-k_1E_1-\ldots-k_9E_9,$$
   where $E_1,\ldots,E_9$ are the exceptional divisors of $f$ and
   $k_1,\ldots,k_9 \ge 0$ and $d> 0$
   is the degree of the image $f(C)$ in $\P^2$.

   For $i=1,\ldots, 9$, let $m_i=\frac{k_i}{d}$. Since
   $C$ is different from proper transforms of the $12$ lines distinguished above,
   taking the intersection product of $C$ with the 12 lines, and dividing by $d$, we obtain exactly the $12$ inequalities in Lemma \ref{lem: nice}.
   The conclusion of Lemma \ref{lem: nice} implies then that
   $$C^2=d^2-\sum_{i=1}^9m_i^2\geq 0,$$
   which finishes our argument.
\endproof
\begin{corollary}\label{cor:bnc on 3-torsion}
   For the surface $X$ in Theorem \ref{thm: 3 torsion} Conjecture \ref{bnc} holds with
   $$b(X)=-2.$$
\end{corollary}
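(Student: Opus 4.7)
The plan is to read off the corollary directly from Theorem~\ref{thm: 3 torsion}, which has already done the hard classification work. All that remains is a bookkeeping computation of self-intersections together with a verification that the minimum value is actually attained (so that $b(X)$ equals $-2$ rather than merely being bounded by $-2$).

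First I would recall that $b(X)$ is defined as the largest $\tau$ with $C^2 \ge \tau$ for every reduced and irreducible $C \subset X$. For any curve with $C^2 \ge 0$ there is nothing to check, so it is enough to bound $C^2$ from below for the negative curves. By Theorem~\ref{thm: 3 torsion}, these fall into exactly two classes: the nine exceptional divisors $E_1, \dots, E_9$ of $f$, each with $E_i^2 = -1$, and the proper transforms of the twelve lines $L$ in $\P^2$ passing through a pair (equivalently a triple) of the flexes $P_1, \dots, P_9$.

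Next I would compute the self-intersection in the second family. A line passing through the flexes $P_i, P_j, P_k$ has proper transform of class $H - E_i - E_j - E_k$ in $\Pic(X)$, where $H = f^*\mathcal{O}_{\P^2}(1)$. Using $H^2 = 1$, $E_i^2 = -1$, and $H \cdot E_i = 0$, we get
\begin{equation*}
(H - E_i - E_j - E_k)^2 = 1 - 3 = -2.
\end{equation*}
In particular, such a proper transform realizes the value $C^2 = -2$.

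Combining the two computations, every negative curve $C$ on $X$ satisfies $C^2 \in \{-1, -2\}$, so $C^2 \ge -2$ for every reduced and irreducible curve $C \subset X$, confirming that Conjecture~\ref{bnc} holds on $X$. Since the lower bound $-2$ is attained (by any of the twelve proper transforms above), the optimal constant is $b(X) = -2$. There is no real obstacle here: the only point requiring care is the transition from ``$-2$ is a lower bound'' to ``$-2$ is the optimal lower bound,'' which is settled simply by exhibiting a curve with $C^2 = -2$.
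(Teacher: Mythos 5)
Your proposal is correct and is essentially the argument the paper intends: the corollary is an immediate consequence of the classification in Theorem~\ref{thm: 3 torsion}, whose statement already records that the twelve proper transforms of lines have $C^2=-2$ and the exceptional divisors have $C^2=-1$, so the infimum $-2$ is attained and $b(X)=-2$. Your explicit computation $(H-E_i-E_j-E_k)^2=1-3=-2$ and the remark that attainment is what upgrades the bound to the optimal constant are exactly the right bookkeeping.
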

\begin{remark}\label{rem: fibrations}
   Theorem \ref{thm: 3 torsion} fits in a more general setting of elliptic fibrations. Negative curves
   on surfaces $X$ with $h^0(X, -mK_X)\geq 2$ for some $m\geq 2$ have been studied by Harbourne and Miranda in \cite{HarMir90}.
\end{remark}
   The observation in Remark \ref{rem: fibrations} allows us to explain results of Theorem \ref{thm: 3 torsion}
   from another point of view. Let $\wtilde{D}$ be the proper transform of $D$. Then, it is a member
   of the Hesse pencil, see \cite{ArtDol09}, in particular the linear system $|\wtilde{D}|$ defines
   a morphism from $X$ to $\P^1$. The components of reducible fibers are $(-2)$ curves. There are 12 of them and they are proper transforms of lines passing through triples of blown-up points.
   The exceptional divisors over these points are the $(-1)$ curves. These are sections of the fibration determined by $\wtilde{D}$.

   Clearly Corollary \ref{cor:bnc on 3-torsion} follows also from the adjunction and the fact that $-K_X$ is effective, see Remark \ref{anti-canonical}.
   Of course, there is no reason to restrict to $3$--torsion points.
\begin{remark}
   With the same approach one can show that $m\geq 4$ the Bounded Negativity Conjecture holds on the blow ups of $\P^2$
   at all the $m$--torsion points of an elliptic curve
   embedded as a smooth cubic and we have
   $$b(X)=9-m^2.$$
\end{remark}
\subsection{Fermat configuration of points}
   The $9$ points and $12$ lines considered in subsection \ref{ssec: torsion points} form the famous
   Hesse arrangement of lines;  see \cite{Hir83}.
   Any such arrangement is projectively equivalent to that obtained from the flex points of the Fermat cubic
   $x^3+y^3+z^3=0$ and the lines determined by their pairs. Explicitly in coordinates we have then
   $$P_1=(1:\eps:0),\; P_2=(1:\eps^2:0),\; P_3=(1:1:0),$$
   $$P_4=(1:0:\eps),\; P_5=(1:0:\eps^2),\; P_6=(1:0:1),$$
   $$P_7=(0:1:\eps),\; P_8=(0:1:\eps^2),\; P_9=(0:1:1),$$
   for the points and
   $$x=0,\; y=0,\; z=0,\; x+y+z=0, x+y+\eps z=0,\; x+y+\eps^2 z=0$$
   $$x+\eps y+z=0,\; x+\eps^2 y+z=0,\; x+\eps y+\eps z=0,\; x+\eps y+\eps^2 z=0,\; x+\eps^2 y+\eps z=0, x+\eps^2 y+\eps^2 z=0,$$
   for the lines, where $\eps$ is a primitive root of unity of order $3$.

   Passing to the dual plane, we obtain an arrangement of $9$ lines
   defined by the linear factors of the Fermat polynomial
   $$(x^3-y^3)(y^3-z^3)(z^3-x^3)=0.$$
   These lines intersect in triples in $12$ points, which are dual to the lines of the Hesse arrangement.
   The resulting dual Hesse configuration has the type $(9_4, 12_3)$ and it belongs to a much
   bigger family of Fermat arrangements; see \cite{Szp19c}. Figure \ref{fig: dual Hesse}
   is an attempt to visualize this arrangement (which cannot be drawn in the real plane
   due to the famous Sylvester-Gallai Theorem; for instance, see \cite{Mel41}).
\begin{figure}[h]
	\begin{center}
		\includegraphics[width=0.50\textwidth]{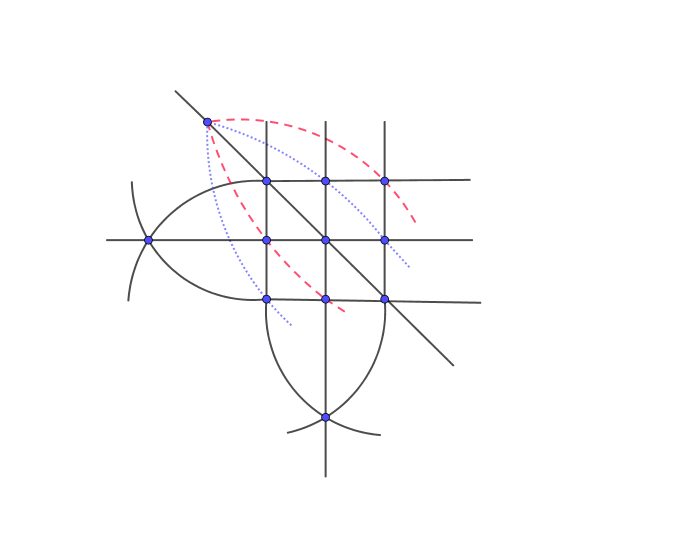}
	\end{center}
\vskip-35pt
\caption{Fermat configuration of points}
\label{fig: dual Hesse}
\end{figure}

   It is convenient to order the $9$ intersection points in the affine part in the following way:
$$
\begin{array}{lll}
   Q_1=(\eps: \eps: 1),& Q_2=(1: \eps: 1),& Q_3=(\eps^2: \eps: 1),\\
   Q_4=(\eps: 1: 1),& Q_5=(1: 1: 1),& Q_6=(\eps^2: 1: 1),\\
   Q_7=(\eps: \eps^2: 1),& Q_8=(1: \eps^2: 1),& Q_9=(\eps^2: \eps^2: 1).
\end{array}
$$
   With this notation established, we have the following result.
\begin{theorem}[Fermat points]\label{thm: Fermat}
   Let $f:X\to\P^2$ be the blow up of $\P^2$ at $Q_1,\ldots,Q_9$.
   If $C$ is a negative curve on $X$, then
   \begin{itemize}
      \item[a)] $C$ is the proper transform of a line passing through two or three
      of the points $Q_1,\ldots,Q_9,$ or
      \item[b)] $C$ is a $(-1)$-curve.
   \end{itemize}
\end{theorem}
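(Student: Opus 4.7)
The plan is to exhibit $X$ as a rational elliptic surface via the anticanonical pencil and then classify negative curves by the standard adjunction dichotomy on elliptic fibrations, in the spirit of Remark~\ref{rem: fibrations}.

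First, I would pin down the anticanonical system on $X$. Since each $Q_i$ has the form $(\eps^{a_i},\eps^{b_i},1)$ with $a_i,b_i\in\Z/3$, the value of any cubic monomial at $Q_i$ depends only on the exponents modulo $3$. A short Fourier computation on the character group $(\Z/3)^2$ then shows that the cubics vanishing at all nine $Q_i$ are precisely those of the form $ax^3+by^3+cz^3$ with $a+b+c=0$. Hence $h^0(-K_X)=2$ and $|-K_X|$ is the pencil spanned by $F_1=x^3-y^3$ and $F_2=y^3-z^3$, whose base locus in $\P^2$ is exactly $\{Q_1,\ldots,Q_9\}$. After the blow-up, $|-K_X|$ is base-point free and defines an elliptic fibration $\pi\colon X\to\P^1$.

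Next, I would identify the reducible fibers. A general member $\lambda F_1+\mu F_2=\lambda x^3+(\mu-\lambda)y^3-\mu z^3$ is smooth whenever all three coefficients are nonzero (it is then projectively equivalent to the Fermat cubic), so reducibility occurs exactly at $[\lambda:\mu]\in\{[1:0],[0:1],[1:1]\}$, giving the three cubics $F_1$, $F_2$, and $F_3=-F_1-F_2$. Each factors as three Fermat lines all concurrent at one of the coordinate points $(0:0:1)$, $(1:0:0)$, $(0:1:0)$, which are not blown up; each therefore produces a Kodaira type $IV$ fiber. The nine components of these reducible fibers are precisely the proper transforms of the nine Fermat lines, each a $(-2)$-curve through three of the $Q_i$.

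The classification then follows formally from adjunction. For an irreducible curve $C\subset X$ and a general fiber $F$, one has $C\cdot F\geq0$. If $C\cdot F=0$, then $C$ is a component of a reducible fiber, so $C$ is a proper transform of a Fermat line through three of the $Q_i$---case (a). Otherwise $C\cdot F\geq1$, and since $-K_X\sim F$ the inequality $p_a(C)\geq0$ rewrites as $C^2\geq C\cdot F-2$, so $C^2\geq0$ whenever $C\cdot F\geq2$; the only remaining possibility with $C^2<0$ is $C\cdot F=1$ and $C^2=-1$, i.e.\ a $(-1)$-curve---case (b). The proper transform of a line through exactly two of the $Q_i$ is automatically a $(-1)$-curve and a section of $\pi$, so it fits into case (b) (equivalently, the two-point subcase of (a)). The only genuinely computational step is the anticanonical analysis---verifying $h^0(-K_X)=2$, identifying the nine $Q_i$ as the full base locus, and confirming that each of the three reducible cubics is a union of three concurrent lines (hence a fiber of type $IV$)---after which the rational elliptic surface structure is in place and the dichotomy is automatic.
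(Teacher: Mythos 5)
Your argument is correct, but it follows a genuinely different route from the paper. The paper proves Theorem~\ref{thm: Fermat} by the same scheme as Theorem~\ref{thm: 3 torsion}: writing $C=dH-\sum k_iE_i$ with $d$ minimal, obtaining nine of the twelve inequalities of Lemma~\ref{lem: nice} from intersecting $C$ with the nine Fermat lines, forcing the remaining three by a standard Cremona transformation based on the relevant non-collinear triple (which would otherwise strictly drop the degree, contradicting minimality), and then invoking Lemma~\ref{lem: nice} to get $C^2=d^2-\sum k_i^2\geq 0$. You instead realize $X$ as a rational elliptic surface: your Fourier/character computation correctly shows that the cubics through $Q_1,\ldots,Q_9$ form exactly the net-free pencil $\langle x^3-y^3,\,y^3-z^3\rangle$, whose base locus is the nine points, so $|-K_X|$ defines a fibration with three type~$IV$ fibers supported on the nine Fermat lines; the classification then drops out of the standard dichotomy (a negative curve with $C\cdot F=0$ is a component of a reducible fiber by Zariski's lemma, and $C\cdot F\geq 1$ together with $p_a(C)\geq 0$ forces $C\cdot F=1$, $C^2=-1$, $p_a(C)=0$). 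This is precisely the Harbourne--Miranda viewpoint that the paper itself gestures at in Remark~\ref{rem: fibrations} and Remark~\ref{non-extremal} but does not use as the actual proof. Your approach is more structural and scales to other anticanonical pencils, at the cost of the explicit anticanonical computation and the appeal to Zariski's lemma; the paper's approach is more elementary and, as a by-product of the Cremona step, yields the explicit enumeration of all $(-1)$-curves in Corollary~\ref{prop: all -1 curves}, which your method does not directly provide.
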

\proof
   The proof of Theorem \ref{thm: 3 torsion} works with very few adjustments.

   Let us assume, to begin with, that $C$ is a negative curve on $X$,
   distinct from the curves
   listed in the theorem. Then
   $$C=dH-k_1E_1-\ldots-k_9E_9,$$
   for some $d>0$ and $k_1,\ldots,k_9\geq 0$. We can also assume that $d$ is the smallest
   number for which such a negative curve exists.
   As before, we set
   $$m_i=\frac{k_i}{d}\;\mbox{ for }\; i=1,\ldots,9.$$
   Then the inequalities \eqref{assumptions} to
   \eqref{1} follow from the fact that $C$ intersects the $9$ lines in
   the arrangement non-negatively.

   If one of the remaining inequalities \eqref{2}, \eqref{3} or \eqref{4} fails,
   then we perform a standard Cremona transformation based on the points involved
   in the failing inequality. For example, if \eqref{2} fails, we make Cremona based
   on points $Q_1, Q_6$ and $Q_8$. Note that these points are \emph{not} collinear
   in the set-up of our Theorem. Since $C$ is assumed not to be a line through any
   two of these points, its image $C'$ under Cremona is a curve of strictly lower degree,
   negative on the blow up of $\P^2$ at the $9$ points. The points $Q_1,\ldots,Q_9$
   remain unchanged by the Cremona because, as already remarked, all dual Hesse
   arrangements are projectively equivalent. Then $C'$ is again a negative curve
   on $X$ of degree strictly lower than $d$, which contradicts our
   choice of $C$ such that $C \cdot H$ is minimal.

   Hence, we can assume that the inequalities \eqref{2}, \eqref{3} and \eqref{4}
   are also satisfied. Then we conclude exactly as in the proof of Theorem \ref{thm: 3 torsion}.
\endproof

\begin{remark}\label{non-extremal} The surface $X$ considered in Theorem \ref{thm: Fermat}
  is a \textit{non-extremal Jacobian rational elliptic surface} and
  contains infinitely many $(-1)$-curves. See \cite{HarMir90} for more
  details.
\end{remark}

   In fact, we are in the position to identify all these $(-1)$-curves.
   Let $L(X,Y)$ denote the line determine two distinct points $X$ and $Y$.
   Let
   $$\call=\left\{U_1=L(Q_1,Q_6), U_2=L(Q_1,Q_8), U_3=L(Q_6,Q_8),
   V_1=L(Q_2,Q_4),\right.$$$$\left. V_2=L(Q_2,Q_9), V_3=L(Q_4,Q_9),
   W_1=L(Q_3,Q_5), W_2=L(Q_3,Q_7), W_3=L(Q_5,Q_7)\right\}$$
   be the set of lines determined by pairs of points $Q_i, Q_j$ with
   $1\leq i<j\leq 9$ which contain only $2$ points $Q_k$.
   These lines can grouped in three ''triangles'', which is indicated
   by the letters $U, V$ and $W$ used to labeling relevant triples.
   Vertices of these triangles determine standard Cremona transformations,
   which we denote by $\phi_1$ for the $U$-triangle, i.e., points
   $Q_1, Q_6, Q_8$ and $\phi_2$ and $\phi_3$ for the $V$ and $W$-triangles respectively.
\begin{corollary}\label{prop: all -1 curves}
   Let $C\subset X$ be a $(-1)$-curve. Then either $C\in\call$ or there exists a positive integer $r\geq 1$ and
   a sequence of Cremona transformations $\phi=\phi_{i_r}\circ\ldots\circ\phi_{i_1}$ with $i_1,\ldots,i_r\in\left\{1,2,3\right\}$
   such that $C$ is the image under $\phi$ of one of the lines in $\call$.
\end{corollary}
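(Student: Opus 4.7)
The plan is to descend on $d = C \cdot H$ using the three Cremona involutions $\phi_1, \phi_2, \phi_3$. As remarked during the proof of Theorem~\ref{thm: Fermat}, each $\phi_j$ lifts to a biregular automorphism of $X$ because the configuration $\{Q_1, \ldots, Q_9\}$ is preserved setwise, hence $\phi_j$ sends $(-1)$-curves to $(-1)$-curves. Writing $C = dH - k_1 E_1 - \cdots - k_9 E_9$, the standard Cremona formulas $\phi_j^*(H) = 2H - E_a - E_b - E_c$ and $\phi_j^*(E_a) = H - E_b - E_c$ (with $(a,b,c)$ the triangle triple defining $\phi_j$) give that the $H$-degree of $\phi_j(C)$ equals $d' = 2d - k_a - k_b - k_c$.

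The base case $d \leq 1$ is treated directly. If $d = 1$, then $C^2 = -1$ forces $\sum k_i^2 = 2$, so $C$ is the proper transform of a line through exactly two of the $Q_i$'s (a line through three would give a $(-2)$-curve), hence $C \in \mathcal{L}$. If $d = 0$, then $C = E_i$ for some $i$, and direct inspection exhibits $E_i$ as $\phi_j(L)$ for a suitable $L \in \mathcal{L}$ and $j \in \{1,2,3\}$: for example $\phi_1(U_1) = E_8$, $\phi_1(U_2) = E_6$, $\phi_1(U_3) = E_1$, and analogous identities for $\phi_2$ and $\phi_3$ cover the remaining six exceptional divisors.

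For the inductive step $d \geq 2$, the task is to locate a triangle triple $(a,b,c) \in \{(1,6,8), (2,4,9), (3,5,7)\}$ with $k_a + k_b + k_c > d$. Since $C$ is irreducible and distinct from the nine $(-2)$-curves corresponding to the Fermat lines, the nine inequalities \eqref{assumptions}--\eqref{1} hold automatically. If additionally the three remaining inequalities \eqref{2}, \eqref{3}, \eqref{4} held, Lemma~\ref{lem: nice} applied to $m_i = k_i/d$ would yield $\sum m_i^2 \leq 1$, hence $C^2 = d^2 - \sum k_i^2 \geq 0$, contradicting $C^2 = -1$. Therefore at least one triangle inequality must be strictly violated; applying the corresponding $\phi_j$ produces a $(-1)$-curve of strictly smaller $H$-degree. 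By induction, this new curve is of the form claimed in the corollary, and since $\phi_j$ is an involution, prepending it to the sequence exhibits $C$ in the required form as well.

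The main obstacle is guaranteeing strict decrease of $d$ at each descent step, which is precisely the content of Lemma~\ref{lem: nice} in contrapositive form: not all twelve inequalities can hold for a $(-1)$-curve, and since the nine Fermat-line inequalities hold unconditionally, at least one of the three triangle inequalities must fail strictly, yielding $d' < d$. Everything else is bookkeeping on compositions of the involutions $\phi_1, \phi_2, \phi_3$.
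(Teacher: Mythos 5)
Your proposal is correct and follows essentially the same route as the paper, which simply observes that the corollary follows from the descent argument in the proof of Theorem \ref{thm: Fermat}: the nine line-inequalities force one of the three triangle inequalities of Lemma \ref{lem: nice} to fail for any $(-1)$-curve of degree $\geq 2$, so the corresponding Cremona strictly drops the degree. Your write-up just makes the induction explicit, including the base cases $d=0,1$ (e.g.\ $\phi_1(U_1)=E_8$) that the paper leaves implicit.
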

\proof
   The statement follows directly from the proof of Theorem \ref{thm: Fermat}.
   There is an interesting regularity in applying Cremona transformation, which we would like to present additionally.
   This is done best by the way of an example. Recall that there is the following general rule concerning
   changes of degree and multiplicities, when applying Cremona transformation. Let $\phi$ be the standard Cremona
   transformation based on a triangle $F, G, H$. Let $C$ be a curve of degree $d$, different from the three lines
   $L(F,G)$, $L(F,H)$ and $L(G,H)$ passing through the points $F, G, H$ with multiplicities $m_F, m_G, m_H$.
   Let $k=d-m_F-m_G-m_H$.
   Then the image curve $C'=\phi(C)$ has degree $d+k$ and multiplicities $m_F+k$, $m_G+k$, $m_H+k$ in the base
   points of the reverse Cremona transformation.
   In Table \ref{tab: Cremona} we present how the line $L(Q_1,Q_6)$ transforms under the sequence of
   Cremona transformations indicated in the first column. If it is possible to perform one of $2$ Cremonas, we indicate
   it by writing the chosen one in boldface. Of course, it is always possible to choose the Cremona
   performed in the last step but as this leads to nothing new, we ignore this option.
   \renewcommand*{\arraystretch}{1.5}
   \begin{table}\label{tab: Cremona}
   $$
   \begin{array}{|c||c||ccc||ccc||ccc|}
   \hline
   Cremona & \deg & Q_1 & Q_6 & Q_8 & Q_2 & Q_4 & Q_9 & Q_3 & Q_5 & Q_7\\
   \hline
   \hline
           & 1    & 1   & 1   & 0   & 0   & 0   & 0   & 0   & 0   & 0\\
   \hline
   \boldsymbol{\phi_2} & 2    & 1   & 1   & 0   & 1   & 1   & 1   & 0   & 0   & 0\\
   \hline
               \phi_3  & 4    & 1   & 1   & 0   & 1   & 1   & 1   & 2   & 2   & 2\\
   \hline
   \boldsymbol{\phi_1} & 6    & 3   & 3   & 2   & 1   & 1   & 1   & 2   & 2   & 2\\
   \hline
               \phi_2  & 9    & 3   & 3   & 2   & 4   & 4   & 4   & 2   & 2   & 2\\
   \hline
   \boldsymbol{\phi_3} & 12   & 3   & 3   & 2   & 4   & 4   & 4   & 5   & 5   & 5\\
   \hline
   \end{array}$$
   \caption{A series of Cremona transformations}
   \end{table}
   The diagram in Figure \ref{fig: diagram} indicates possible bifurcations at the places where one of two Cremona transformations
   can be performed. For simplicity, we put only degree of resulting $(-1)$-curves in the diagram.
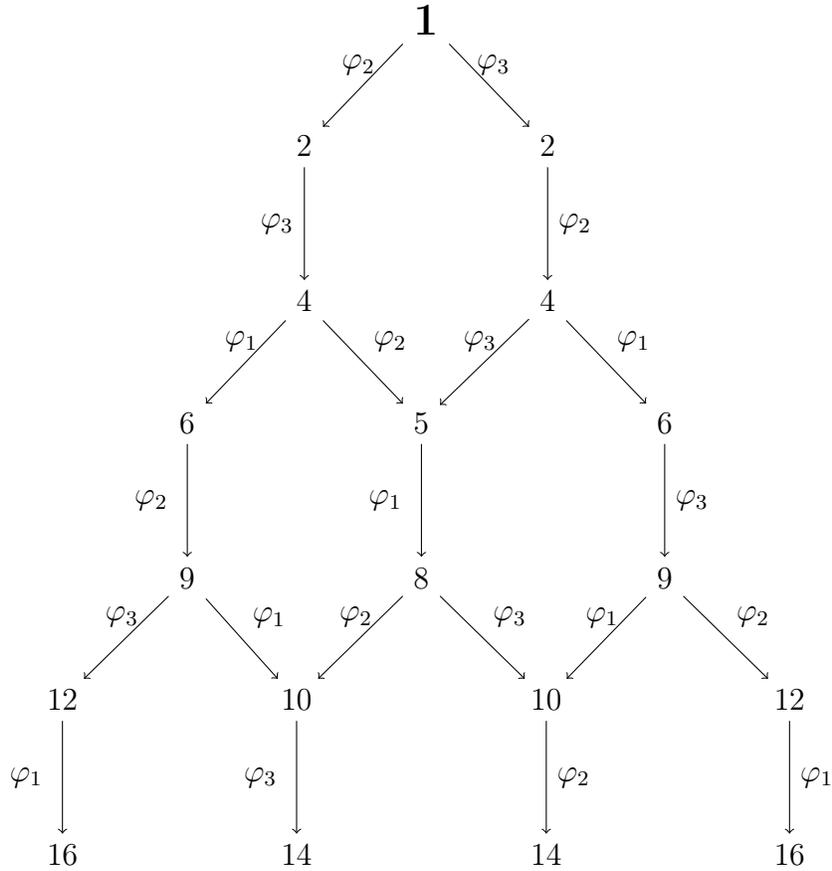
\begin{figure}
\begin{tikzpicture}[->,auto,node distance=1.5cm,main node/.style={font=\Large\bfseries}]
  %
  \node[main node] (C) {1};
  \node (CL) [below left = of C] {2};
  \node (CR) [below right = of C] {2};
  \node (CLC) [below = of CL] {4};
  \node (CRC) [below = of CR] {4};
  \node (CLCL) [below left = of CLC] {6};
  \node (CRCR) [below right = of CRC] {6};
  \node (CC) [below right = of CLC] {5};
  \node (CLCLC) [below = of CLCL] {9};
  \node (CCC) [below = of CC] {8};
  \node (CRCRC) [below = of CRCR] {9};
  \node (CRCRCR) [below right = of CRCRC] {12};
  \node (CCCL) [below left = of CCC] {10};
  \node (CCCR) [below right = of CCC] {10};
  \node (CLCLCL) [below left  = of CLCLC] {12};
  \node (CLCLCLC) [below = of CLCLCL] {16};
  \node (CCCLC) [below = of CCCL] {14};
  \node (CCCRC) [below = of CCCR] {14};
  \node (CRCRCRC) [below = of CRCRCR] {16};
  \path (C) edge node[above] {$\phi_2\;$} (CL)
        (C) edge node[above] {$\;\phi_3$} (CR)
        (CL) edge node[left] {$\phi_3$} (CLC)
        (CR) edge node {$\phi_2$} (CRC)
        (CLC) edge node[above] {$\phi_1\;$} (CLCL)
        (CLC) edge node {$\phi_2$} (CC)
        (CRC) edge node[above] {$\phi_3\;$} (CC)
        (CRC) edge node {$\phi_1$} (CRCR)
        (CLCL) edge node[left] {$\phi_2\;$} (CLCLC)
        (CC) edge node[left] {$\phi_1\;$} (CCC)
        (CRCR) edge node {$\phi_3\;$} (CRCRC)
        (CLCLC) edge node[above] {$\phi_3\;$} (CLCLCL)
        (CLCLC) edge node {$\phi_1\;$} (CCCL)
        (CCC) edge node[above] {$\phi_2\;$} (CCCL)
        (CCC) edge node {$\phi_3\;$} (CCCR)
        (CRCRC) edge node[above] {$\phi_1\;$} (CCCR)
        (CRCRC) edge node {$\phi_2\;$} (CRCRCR)
        (CLCLCL) edge node[left] {$\phi_1\;$} (CLCLCLC)
        (CCCL) edge node[left] {$\phi_3\;$} (CCCLC)
        (CCCR) edge node {$\phi_2\;$} (CCCRC)
        (CRCRCR) edge node {$\phi_1\;$} (CRCRCRC);
\end{tikzpicture}
\caption{Bifurcations of Cremona transformations}
\label{fig: diagram}
\end{figure}
\endproof
   The diagram in Figure \ref{fig: diagram} seems quite interesting in its own right.
   There is a vertical symmetry, and it leads to a scheme of numbers indicated in Table \ref{tab: diagram},
   which has some reminiscences to the Pascal's triangle.
   \renewcommand*{\arraystretch}{1.5}
   \begin{table}
   $$
   \begin{array}{ccccccc}
    &&& 1 &&&\\
    && 2 && 2 &&\\
    && 4 && 4 &&\\
    & 6 && 5 && 6 &\\
    & 9 && 8 && 9 &\\
    12 && 10 && 10 && 12\\
    16 && 14 && 14 && 16
   \end{array}
   $$
   \caption{Cremona hexal}
   \label{tab: diagram}
   \end{table}
\begin{problem}
   Investigate numerical properties of the Cremona hexal. For example, find a direct formula
   for the entry in line $i$ and column $j$.
\end{problem}
\begin{remark}\label{anti-canonical}
   If we are interested only in the bounded negativity
   property on $X$, then there is a simple proof. Indeed,
if $C \subset X$ is a reduced and irreducible curve, the genus formula
gives
$$1+\frac{C\cdot (C+K_X)}{2}\geq 0.$$
Now, since the anti-canonical divisor on the blow up of $\P^2$
   in the $9$ Fermat points is effective, we conclude that $C$ is a
   component of $-K_X$ or
   $$C^2\geq -2-CK_X\geq -2.$$
\end{remark}
Having classified all the negative curves on the blow up of $\P^2$ at the
9 Fermat points, it is natural to wonder about the negative curves on
blow ups of $\P^2$ arising from the other Fermat configurations.
Note that the argument given in Remark \ref{anti-canonical} is no
longer valid, since $-K_X$ is not nef or effective anymore. So it will
be interesting to ask whether BNC holds for such surfaces. We pose the following problem.

\begin{problem}\label{pro: Fermat}
   For a positive integer $m$, let $Z(m)$ be the set of all points of the form
   $$(1:\eps^{\alpha}:\eps^{\beta}),$$
   where $\eps$ is a primitive root of unity of order $m$ and $1 \le
   \alpha,\beta \le m$.
   Let $f_m:X(m)\to\P^2$ be the blow up of $\P^2$ at all the points of $Z(m)$.
   Is the negativity bounded on $X(m)$? If so, what is the value of $b(X(m))$?
\end{problem}

We end this note by the following remark which discusses bounded
negativity for blow ups of $\proj^2$ at 10 points.
\begin{remark}
Let $X$ denote a blow up of $\proj^2$ at 10 points. As mentioned
before, if the blown up points are general, then Conjecture
\ref{(-1)-curves} predicts that the only negative curves on $X$ are
$(-1)$-curves. This is an open question. On the other hand, let us consider a couple of examples
of special points.

Let $X$ be obtained by blowing up the 10 nodes of an irreducible and
reduced rational nodal sextic. Such surfaces are called \textit{Coble
  surfaces} (these are smooth rational surfaces $X$ such that
$|-K_X| = \emptyset$, but $|-2K_X| \ne \emptyset$). Then it is known
that BNC holds for $X$. In fact, we have $C^2 \ge -4$ for every
irreducible and reduced curve $C\subset X$; see \cite[Section
3.2]{CD}.

Now let $X$ be the blow up of 10 double points of intersection of 5
general lines in $\proj^2$. Then $-K_X$ is a big divisor and by \cite[Theorem
1]{TVV}, $X$ is a \textit{Mori dream space}. For such surfaces, the
submonoid of the Picard group generated by the effective classes
is finitely generated. Hence BNC holds for $X$
(\cite[Proposition I.2.5]{Har10}).
\end{remark}

\textbf{Acknowledgements:}
A part of this work was done when KH visited the
Pedagogical University of Krakow in October 2018. He is grateful to
the university and the department of mathematics for making it a wonderful visit.
This research stay of KH was partially supported by the Simons Foundation
and by the Mathematisches Forschungsinstitut Oberwolfach and he is
grateful to them. The authors thank the referee for making several
useful comments which improved this note. Finally, the authors warmly
thank Brian Harbourne for suggesting corrections and ameliorations
which substantially improved the paper.



\end{document}